\documentclass{amsart}

\usepackage{amssymb}

\newcommand{\Ps}{\mathbf{P}}

\newcommand{\C}{\mathbf{C}}
\newcommand{\Q}{\mathbf{Q}}
\newcommand{\Z}{\mathbf{Z}}

\newtheorem{lemma}{Lemma}[section]
\newtheorem{proposition}[lemma]{Proposition}
\newtheorem{theorem}[lemma]{Theorem}
\newtheorem{corollary}[lemma]{Corollary}

\theoremstyle{definition}

\newtheorem{construction}[lemma]{Construction}

\theoremstyle{remark}
\newtheorem{remark}[lemma]{Remark}

\DeclareMathOperator{\codim}{codim}
\DeclareMathOperator{\rank}{rank}
\DeclareMathOperator{\rk}{rk}

\DeclareMathOperator{\Cl}{Cl}

\DeclareMathOperator{\Pic}{Pic}

\DeclareMathOperator{\sing}{sing}
\title{The Ciliberto-Di Gennaro conjecture for $d=5$}
\author{Remke Kloosterman}
\begin{document}
\begin{abstract}
The Ciliberto-Di Gennaro conjecture predicts that a nodal hypersurface of degree $d\geq 3$ with at most $2(d-2)(d-1)$  nodes is either factorial, or contains a plane and has at least $(d-1)^2$ nodes, or contains a quadric surface and has $2(d-2)(d-1)$ nodes. This conjecture is classically known for $d=3,4$. In 2022 the author proved  this conjecture for $d\geq 7$ by the author. Kvitko announced a proof for $d=6$ in 2025.
In this paper we prove the conjecture for the remaining open value of $d$, namely $d=5$.
\end{abstract}
\maketitle
\section{Introduction}
The Ciliberto-Di Gennaro conjecture predicts that a non-factorial threefold $X$ of degree $d\geq 3$ in $\Ps^4$ with at most $2(d-2)(d-1)$ nodes contains either a plane or a quadric surface. 
 This conjecture has been proven for $d=3$ by \cite{FinWer} and for $d=4$ by \cite{ChelPac,Shramov}. 
 We proved this conjecture for $d\geq 7$ in \cite{KloMaxNod}. The latter proof uses hyperplane sections to restrict the possibilities for the Hilbert function of the ideal of the nodes. There is a general argument which is enough for the case that $d\geq 8$  and $X$ has at most $2(d-2)(d-1)$ nodes to conclude that $X$ contains a plane or  a quadric surface. For $d=7$ this apporach is insufficient and yields one additional possibile Hilbert function for the ideal of nodes. We excluded this Hilbert function by an ad hoc argument. For $d=6$ there are several possibile Hilbert functions not excluded by our approach. Recently, Kvitko \cite{Kvitko} managed to exclude all of these and  thereby proving the conjecture for $d=6$. In this paper we will consider the remaining case $d=5$. We will show:

 \begin{theorem}
  Let $X\subset \Ps^4$ be a nodal hypersurface of degree $d=5$, Suppose that $X$ is non-factorial and that $X$ has at most $2(d-2)(d-1)=24$ nodes then
  either $X$ contains a plane and has at least $16$ nodes or $X$ contains a quadric surface and has exactly $24$ nodes.
 \end{theorem}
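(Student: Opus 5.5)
We follow the approach of \cite{KloMaxNod}, which translates the failure of factoriality into a failure of the nodes to impose independent conditions. Let $\Sigma$ be the set of nodes of $X$ and $I=I(\Sigma)$ its homogeneous ideal. The defect of $X$ is $\delta=\rank \Cl(X)-1$. By the Clemens--Werner formula, $\delta$ equals the failure of $\Sigma$ to impose independent conditions on forms of degree $2d-5=5$:
\[
\delta=\#\Sigma-\bigl(h^0(\mathcal{O}_{\Ps^4}(5))-h^0(\mathcal{I}_\Sigma(5))\bigr).
\]
Non-factoriality means $\delta\geq 1$. Writing $h_\Sigma$ for the Hilbert function of $S/I$, this says $h_\Sigma(5)<\#\Sigma\leq 24$. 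The plan has three steps.

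First, we constrain $h_\Sigma$. The difference function $\Delta h_\Sigma$ is an O-sequence, and by Macaulay/Gotzmann-type growth bounds it must reach $0$ only after degree $5$. We restrict to a general hyperplane $H$ and use, as in \cite{KloMaxNod}, that the nodes of $X\cap H$ force the hyperplane-section points to fail to impose independent conditions in controlled degrees. We then enumerate all admissible Hilbert functions with $\sum_k \Delta h_\Sigma(k)=\#\Sigma\leq 24$ and $h_\Sigma(5)<\#\Sigma$.

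For $d=5$ the list will be longer than for $d\geq 7$. Typically it contains functions with $\Delta h_\Sigma$ ending in a flat tail: for instance $\Delta h_\Sigma(k)=1$ for large $k$, which forces many collinear points, or $\Delta h_\Sigma(k)=2$ or $3$, which suggests points on a conic or on a plane cubic.

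Second, we apply Bigatti--Geramita--Migliore / Davis-type results. When $\Delta h_\Sigma(k)=\Delta h_\Sigma(k+1)=s\leq k$, the points in degree $k$ have a $(k)$-regular subscheme lying on a curve of degree $s$. This produces a line, conic, or plane cubic $C$ containing many nodes. Bezout arguments then promote this to a surface in $X$:
\begin{itemize}
\item A line meeting the singular locus in more than $d/2$ points, counted with multiplicity $2$, lies in $X$.
\item Given a plane $\Pi\supset C$, the curve $X\cap\Pi$ is a quintic singular along those points. If $C$ carries more than $10$ such nodes, or if $X\cap\Pi$ would be forced to be nonreduced, we conclude $\Pi\subset X$ or that a quadric is contained in $X$.
\end{itemize}
Once a plane lies in $X$, the standard count shows there are at least $(d-1)^2=16$ nodes. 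Once a quadric surface lies in $X$ and there is no plane, the nodes are the complete intersection of type $(2,2,\dots)$, giving exactly $2(d-1)(d-2)=24$.

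Third, and this is the step I expect to be the main obstacle, we exclude the remaining Hilbert functions that do not exhibit such a flat tail. These are cases where $\delta=1$ and the points are in fairly general position, e.g.\ $\Delta h_\Sigma=(1,4,\dots)$ with $h_\Sigma(5)=\#\Sigma-1$. Here I would try Kvitko's approach: use a Weil divisor $D$ generating the extra class and its linkage residual $D'$ in $X\cap Y$ for a hypersurface $Y$ of low degree. Riemann--Roch/Hilbert polynomial computations on the surface $D$ of small degree would show that $D$ must be a plane or a quadric; for $d=5$ the candidate degrees $\deg D\leq 4$ are few. Alternatively, we can exploit that the nodes lie on the base locus of $|I_\Sigma(k)|$ for some $k\leq 4$ and derive a contradiction through a nodal curve section count. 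Where such general arguments fail, we would handle the finitely many residual Hilbert functions one by one, as was done for $d=7$.
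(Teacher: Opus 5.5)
Your outline has a genuine gap, and it is not only that the third step is left open: the case division does not fit the configurations the theorem is about.

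The flat-tail criterion of your second step (Bigatti--Geramita--Migliore for $\Delta h_\Sigma(k)=\Delta h_\Sigma(k+1)=s\leq k$) never applies to the actual extremal examples. For a general $F=xA+yB$ with $\deg A=\deg B=4$, the $16$ nodes are the complete intersection $V(x,y,A,B)$ and $\Delta h_\Sigma=(1,2,3,4,3,2,1)$. For a general $F=LA+QB$ with $\deg L=1$, $\deg Q=2$, the $24$ nodes are the complete intersection $V(L,Q,A,B)$ of type $(1,2,4,3)$ (not $(2,2,\dots)$) and $\Delta h_\Sigma=(1,3,5,6,5,3,1)$. Neither has a flat stretch, so these genuine non-factorial quintics land in your third step, where you propose to \emph{exclude} everything without a flat tail.

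Independently, your Bezout bullets only put a line or a conic into $X$. Nothing there upgrades a curve to a plane or a quadric surface, and the clause about $X\cap\Pi$ being ``forced to be nonreduced'' is unsupported. Also, a general $X\cap H$ has no nodes; what one restricts is the ideal of the nodes modulo a general linear form $\ell$. Deferring the residual cases to Kvitko's method or to a case-by-case check does not close the gap. For $d=5$ the hyperplane-section enumeration used for $d\geq 7$ leaves a long residual list, and that list is where the whole difficulty lies.

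The missing idea is the Artinian Gorenstein reduction. Let $J_H$ be the image of the ideal of nodes in $R=S/(\ell)$, so that $R/J_H$ has Hilbert function $\Delta h_\Sigma$. Non-factoriality gives $(R/J_H)_6\neq 0$. Moreover, $(J_H)_k$ is base point free for $k\geq d-1=4$, because the partials of $F$ lie in the ideal of nodes and $X\cap H$ is smooth. Together these yield an Artinian Gorenstein quotient $R/I$ of socle degree $6$, with symmetric $h$-vector $(1,a_1,a_2,a_3,a_2,a_1,1)$ of total sum at most $24$.

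This symmetry is what collapses the enumeration. The case $a_1=4$ dies in two ways: $a_2\geq 5$ gives sum $\geq 25$, while $a_2=4$ gives $a_4=a_5=4$, which forces base points of $I_4$ by Gotzmann persistence. The rest is linkage. Choose a complete intersection $I_{CI}\subset I$ whose generator degrees $d_2\leq d_1\leq d_0\leq d_{-1}\leq 4$ are the degrees where the base locus drops dimension. Then $I=(I_{CI}:h)$ with $\deg h=\sum d_j-10\geq 0$. This kills $a_1=1$ and leaves three possibilities: $I=I_{CI}$ of type $(1,1,4,4)$, $I=I_{CI}$ of type $(1,2,3,4)$, or $I_{CI}$ of type $(1,2,4,4)$ with $\deg h=1$. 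The last is excluded by an explicit syzygy argument.

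Only then do surfaces appear. In the $(1,1,4,4)$ case, the base locus of the cubics through the nodes meets a general hyperplane in a set containing a line. It has dimension at most $2$ by Corollary~\ref{corBase} and the node count, so it contains a plane, and that plane lies in $X$ by Noether--Lefschetz. In the $(1,2,3,4)$ case, the length count forces $I=J_H$. So the ideal of nodes is itself a complete intersection $(L,Q,\dots)$, and $F$ lying in the saturation of its square forces $F\in(L,Q)$.
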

 
 Part of our approach is similar to \cite{KloMaxNod}.
 The main idea for the proof is to consider the ideal of the nodes $J$ and take a general linear form $\ell$. Let $J_H=J+\langle \ell \rangle$. Then $\C[x_0,\dots,x_4]/J_H$ is an algebra which is a finite dimensional $\C$-vector space. One has that $X$ is non-factorial if and only if $(\C[x_0,\dots,x_4]/J_H)_{2d-4}$ is  nonzero. This implies that there is an ideal $I$ containing $J_H$ and such that $\C[x_0,\dots,x_4]/I$ is an Artinian Gorenstein algebra of socle degree $2d-4=6$ and whose vector space dimension is at most the number of nodes of $X$.
 
 The papers \cite{KloMaxNod} and \cite{Kvitko} continue by studying $I_{d-4}$. In the case $d=5$ this yields very little information and we have to take a different approach. We consider directly the possible $h$-vectors for $S/I$ and conclude that $I$ is a complete intersection ideal of multidegree $(1,2,3,4)$ or $(1,1,4,4)$.  
 We then conclude from this that $X$ contains a quadric surface of a plane, using an argument from \cite{KloMaxNod}.

In Section~\ref{secGotzmann} we discuss a result by Gotzmann on Hilbert functions and Hilbert polynomials of ideals and prove two corollaries needed in our proof. In Section~\ref{secAG} we construct an Artinean Gorenstein algebra of socle degree $2d-4$, given a non-factorial nodal hypersurface of degree $d$. In Section~\ref{secPf} we prove the main result.

\section{Gotzmann's result}\label{secGotzmann}

Let $S=\C[x_0,\dots,x_n]$ and let $I\subset S$ be a homogeneous ideal. Let $h_I$ be the Hilbert function of $I$, i.e., $h_I(k)=\dim (S/I)_k$. Let $p_I(t)\in \Q[t]$ be the Hilbert polynomial, i.e.,  the polynomial such that $h_I(k)=p_I(k)$ for $k\in \Z$, $k$ sufficiently large. We will recall two results on limitations of the Hilbert function of $I$, in case $V(I)$ is non empty.

Let $d\geq 1$ be an integer.
Let $c:=h_I(d)$. We can write $c$ uniquely as
\[c= \sum_{i=1}^d \binom{i+\epsilon_i}{i}\]
with $\epsilon_d\geq \epsilon_{d-1}\geq ... \geq \epsilon_1\geq -1$. We call this the \emph{(Macaulay) expansion} of $c$ in base $d$.
This expansion can be obtained inductively as follows: The number $\epsilon_d$ is the largest integer such that $\binom{d+\epsilon_d}{d}\leq c$. The numbers $\epsilon_i$ for $i<d$ are the coefficients in the expansion of $c-\binom{d+\epsilon_d}{d}$ in base $d-1$.

Using the Macaulay expansion of $c$ we define the following  number $c^{\langle d \rangle}:= \sum_{i=1}^d\binom{i+\epsilon_i+1}{i+1}$.
Note that   $c\mapsto c^{\langle d \rangle}$  is an increasing function in $c$.

Recall the following theorem by Macaulay:
\begin{theorem}[{Macaulay \cite{Mac}}]\label{thmMac} Let $V\subset S_d$ be a linear system and $c=\codim V$. Then the codimension of $V\otimes_{\C} S_1$ in $S_{d+1}$ is at most $c^{\langle d\rangle}$.
\end{theorem}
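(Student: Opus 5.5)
Since this is Macaulay's classical theorem, which the paper quotes from \cite{Mac}, I would follow the standard route: reduce to monomial linear systems, then to lex-segments, and finish with a direct count.

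First I reduce to monomials. Fix the degree-lexicographic order and pass to initial spaces. For a linear system $V\subset S_d$, let $\operatorname{in}(V)\subset S_d$ be the span of the leading monomials of elements of $V$. Then $\dim \operatorname{in}(V)=\dim V$, so $\codim \operatorname{in}(V)=c$. Moreover $\operatorname{in}(V\otimes S_1)\supseteq \operatorname{in}(V)\cdot S_1$, since the leading monomial of $x_jf$ is $x_j\operatorname{in}(f)$. Hence $\codim (V\otimes S_1)\leq \codim(\operatorname{in}(V)S_1)$, and it suffices to prove the bound for spaces spanned by monomials.

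Second, I compare an arbitrary monomial space $M\subset S_d$ of codimension $c$ with the lex-segment $L\subset S_d$ spanned by the $\dim M$ lexicographically largest monomials. I will show $\dim (L S_1)\leq \dim (M S_1)$, i.e.\ lex-segments have the slowest growth. This is the main obstacle. I would use the compression argument of Clements--Lindstr\"om / Green, inducting on the number of variables $n+1$. Write $M=\bigoplus_{k} x_0^{k} M_k$, where $M_k$ is a monomial space in $x_1,\dots,x_n$ of degree $d-k$. First replace each $M_k$ by the lex-segment of the same dimension in the smaller ring. By the induction hypothesis this does not increase $\dim(MS_1)$, because $(MS_1)$ in $x_0$-degree $k$ equals $M_k\cdot\langle x_1,\dots,x_n\rangle + M_{k-1}$. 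Then I would show that a space that is ``compressed'' with respect to every variable, but is not a lex-segment, can be modified to one that is closer to lex without increasing the growth. The case $n=1$ (two variables) is checked by hand. The delicate point is choosing the exchange of monomials correctly in this last step; if it becomes awkward, I would switch to Green's hyperplane restriction theorem instead. That theorem gives $\dim (S/(V,h))_d\leq c_{\langle d\rangle}:=\sum_i\binom{i-1+\epsilon_i}{i}$ for a general linear form $h$. Combined with the exact sequence for multiplication by $h$, it yields $h(d+1)\leq c+ (c_{\langle d\rangle})^{\langle d\rangle}$ by induction on $n$.

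Finally, I compute the growth of a lex-segment. The monomials of degree $d$ not in $L$ are the $c$ lexicographically smallest ones. Peeling off variables according to the Macaulay expansion $c=\sum_i\binom{i+\epsilon_i}{i}$, this complement is a disjoint union of blocks. The $i$-th block is of the form $m_i\cdot\{\text{all monomials of degree } i \text{ in } \epsilon_i+1 \text{ variables}\}$ for suitable fixed monomials $m_i$. Since $L$ is a lex-segment, $LS_1$ is again a lex-segment in degree $d+1$. Its complement is obtained by raising each block to degree $i+1$ in the same $\epsilon_i+1$ variables, which gives exactly $\sum_i\binom{i+1+\epsilon_i}{i+1}=c^{\langle d\rangle}$ monomials. (In the hyperplane approach, the analogous identity $c+(c_{\langle d\rangle})^{\langle d\rangle}=c^{\langle d\rangle}$ follows from Pascal's rule.) Combining the three steps gives $\codim(V\otimes S_1)\leq c^{\langle d\rangle}$.
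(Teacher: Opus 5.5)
The paper gives no proof of this statement; it only quotes Macaulay. Your outline therefore has to stand on its own.

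\textbf{The compression route has a gap.} Steps 1 and 3 are correct.
\begin{itemize}
\item Step 1: passing to initial spaces keeps $\dim V$ and can only decrease $\dim(VS_1)$.
\item Step 3: for a lex segment $L$, a monomial $u$ of degree $d+1$ lies in $LS_1$ iff $u$ divided by its smallest variable lies in $L$. This turns each block $m_i\cdot\{\text{degree } i \text{ in the last } \epsilon_i+1 \text{ variables}\}$ of the complement into the corresponding degree $i+1$ block, giving exactly $c^{\langle d\rangle}$.
\item The $x_0$-compression step in Step 2 is also fine, using that lex segments in fewer variables are nested.
\end{itemize}
The gap is the last part of Step 2, which is the whole content of the theorem, and what you leave open is not a technicality: compressing in every variable does not force a lex segment. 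In $\C[x,y,z]$ with $x>y>z$ and $d=2$, take $M=\langle x^2,xy,y^2\rangle$. Every $x$-, $y$- and $z$-fibre of $M$ is an initial lex segment in the remaining two variables. Yet $M$ is not the lex segment $L=\langle x^2,xy,xz\rangle$, and $\dim MS_1=7>6=\dim LS_1$. Getting to a fully compressed space already needs a termination argument, since compressing in one variable can destroy compression in another. Beyond that, you need a separate lemma that classifies the fully compressed non-lex spaces and compares them directly with the lex segment. Clements--Lindstr\"om-type proofs do their real work there, and ``choosing the exchange of monomials correctly'' does not supply it.

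\textbf{Your fallback is a complete argument, granted Green's theorem.} It does not need Steps 1--3 at all; it is Green's own derivation of Macaulay's bound. Let $h$ be general, $\bar S=S/(h)$, and $\bar V$ the image of $V$. The sequence $(S/(V))_d\xrightarrow{h}(S/(V))_{d+1}\to \bar S_{d+1}/\bar V\bar S_1\to 0$ combines with three inputs:
\begin{itemize}
\item induction on the number of variables;
\item monotonicity of $x\mapsto x^{\langle d\rangle}$;
\item Green's bound $\codim\bar V\le c_{\langle d\rangle}$.
\end{itemize}
Together these give $\codim(VS_1)\le c+(c_{\langle d\rangle})^{\langle d\rangle}$. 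Now $\sum_i\binom{i-1+\epsilon_i}{i}$ is the Macaulay expansion of $c_{\langle d\rangle}$: each $\epsilon_i$ is lowered by one, vanishing terms are discarded, and $\binom{m}{i}=0$ for $m<i$. Hence $(c_{\langle d\rangle})^{\langle d\rangle}=\sum_i\binom{i+\epsilon_i}{i+1}$, and Pascal's rule gives $c+(c_{\langle d\rangle})^{\langle d\rangle}=c^{\langle d\rangle}$. Green deduces Macaulay's bound from his restriction theorem in exactly this way, so there is no circularity.

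You should present this route on its own rather than ``combining the three steps''. Its cost is that you trade one classical black box (Macaulay, as in the paper) for another of comparable depth (Green). A self-contained combinatorial proof would instead require actually proving the missing lemma on fully compressed spaces.
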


The following result will be used to detect the Hilbert polynomial of the ideal generated by $I_d$:
\begin{theorem}[{Gotzmann \cite{Gotz}}]\label{thmGotz}
Let $V\subset S_d$ be a linear system and let $I\subset S$ be the ideal generated by $V$. Set $c=h_I(d)$.  If $h_I(d+1)=c^{\langle d \rangle}$ then for all $k\geq d$ we have $h_I(k+1)=h_I(k)^{\langle k \rangle}$. In particular, the Hilbert polynomial $p_I(t)$ of $I$ is given by
\[ \sum_{i=1}^d \binom{t+\epsilon_i}{t}\]
and the dimension of $V(I)$ equals $\epsilon_d$. 
\end{theorem}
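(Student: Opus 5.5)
The statement is Gotzmann's persistence theorem, which the paper quotes from \cite{Gotz}. If I had to prove it, I would argue by induction on the number of variables $n+1$, with Green's hyperplane restriction theorem as the main tool.

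Fix $V\subset S_d$ with ideal $I=(V)$, and choose a general linear form $\ell$. Write $\bar I$ for the image of $I$ in $\bar S=S/\ell S$, a polynomial ring in $n$ variables. Multiplication by $\ell$ gives the exact sequence
\[0\to (S/(I:\ell))_{k-1}\to (S/I)_k\to (\bar S/\bar I)_k\to 0 .\]
Green's theorem bounds $h_{\bar I}(d)$ by $c_{\langle d\rangle}=\sum_{i}\binom{i-1+\epsilon_i}{i}$, which is the expansion of $c$ with the top index of each binomial lowered by one. I would also use the identity $c^{\langle d\rangle}=c+(c_{\langle d\rangle})^{\langle d\rangle}$, read in $n$ variables.

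\emph{Step 1: force equality everywhere.} Apply the sequence in degree $d+1$ and use that $h_{I:\ell}(d)\le h_I(d)=c$. Bound $h_{\bar I}(d+1)$ first by Macaulay's Theorem~\ref{thmMac} in $\bar S$, then by Green. This gives
\[h_I(d+1)\le c+ h_{\bar I}(d)^{\langle d\rangle}\le c+(c_{\langle d\rangle})^{\langle d\rangle}=c^{\langle d\rangle}.\]
So the hypothesis $h_I(d+1)=c^{\langle d\rangle}$ forces every inequality to be an equality. In particular:
\begin{itemize}
\item $(I:\ell)_d=I_d$;
\item $h_{\bar I}(d)=c_{\langle d\rangle}$;
\item $\bar I$, which is generated in degree $d$, has maximal growth from $d$ to $d+1$.
\end{itemize}

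\emph{Step 2: induction and lifting.} By the induction hypothesis in $n$ variables, $\bar I$ has maximal growth in every degree $k\ge d$. The case of one variable, or $c$ small, serves as the base. I then want to show $h_I(k+1)=h_I(k)^{\langle k\rangle}$ for all $k\ge d$ by induction on $k$. The same chain of inequalities in degree $k+1$ reduces this to two facts:
\begin{itemize}
\item the Macaulay expansion of $h_I(k)$ is obtained from that of $c$ by shifting $(i,\epsilon_i)\mapsto(i+k-d,\epsilon_i)$;
\item $(I:\ell)_k=I_k$.
\end{itemize}
The first is a bookkeeping check that $x\mapsto x^{\langle k\rangle}$ acts this way on expansions. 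The second I expect to be the main obstacle. One would like $\ell$ to be a nonzerodivisor on $S/I$ in high degree, which is false for non-saturated $I$ in general. The plan is to use the equality in Step 1 in each degree: maximal growth in degree $k$ gives $h_{I:\ell}(k)=h_I(k)$, and this propagates once the statement is known one degree lower. Alternatively, one can pass to a generic initial ideal in reverse lexicographic order and argue with Eliahou--Kervaire resolutions of stable ideals, which is Gotzmann's original route via regularity.

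\emph{Step 3: Hilbert polynomial and dimension.} Iterating $h\mapsto h^{\langle k\rangle}$ from degree $d$ gives
\[h_I(k)=\sum_{i=1}^d\binom{k-d+i+\epsilon_i}{k-d+i}\qquad\text{for } k\ge d.\]
I would reindex this into the closed form stated in the theorem, carrying the shift $k-d$ through the binomials; this is routine but the index bookkeeping must be done carefully. The leading term in $t$ comes from $i=d$ and has degree $\epsilon_d$. Hence $\deg p_I=\epsilon_d$, and therefore $\dim V(I)=\epsilon_d$.
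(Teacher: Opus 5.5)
The paper gives no proof of this statement; it is quoted from \cite{Gotz}. So your sketch has to stand on its own.

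Step~1 is correct. The chain of inequalities and the identity $c^{\langle d\rangle}=c+(c_{\langle d\rangle})^{\langle d\rangle}$ are right. Together with the strict monotonicity of $x\mapsto x^{\langle d\rangle}$, they give $(I:\ell)_d=I_d$, $h_{\bar I}(d)=c_{\langle d\rangle}$, and maximal growth of $\bar I$. The expansion-shifting bookkeeping in Step~2 is also fine.

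\textbf{The gap} is exactly the step you flagged, and the mechanism you offer for it is circular. Suppose persistence is known up to degree $k$, so $(I:\ell)_j=I_j$ for $d\le j<k$. Your own bookkeeping then gives precisely $h_I(k+1)=h_I(k)^{\langle k\rangle}-\dim\bigl((I:\ell)_k/I_k\bigr)$. Thus ``maximal growth in degree $k$'' and ``$(I:\ell)_k=I_k$'' are the same assertion, and neither can be used to prove the other.

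Knowing $(I:\ell)_{k-1}=I_{k-1}$ does not help either. Nor can Macaulay's and Green's inequalities see the error term. Take $c=1$, the case $c\le d$ discussed before Corollary~\ref{corGreen}. Then $c_{\langle d\rangle}=0$, so $\bar I_k=\bar S_k$ for all $k\ge d$ and Green's bound is vacuous. The sequence $h_I(d)=h_I(d+1)=1$, $h_I(d+2)=0$ violates none of the inequalities you use. Ruling it out needs structural input: here, that $h_I(d+1)=1$ forces $I_d$ to be the forms vanishing at a point.

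\textbf{What is missing is regularity.} Strengthen the inductive statement to include Gotzmann's regularity theorem: an ideal generated in degree $d$ with maximal growth from $d$ to $d+1$ is $d$-regular.

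Then $\bar I$, hence $I+\ell S$, is $d$-regular, so $\mathrm{Tor}_2^S(S/(I+\ell S),\C)_j=0$ for $j\ge d+2$. Consider the long exact $\mathrm{Tor}$ sequence of $0\to (S/(I:\ell))(-1)\to S/I\to S/(I+\ell S)\to 0$. The vanishing makes the minimal generators of $I:\ell$ of degree $j-1\ge d+1$ inject into minimal generators of $I$ of degree $j\ge d+2$, of which there are none.

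Hence $I:\ell$ is generated in degrees $\le d$. For $k\ge d$ this gives $(I:\ell)_k\subseteq S_{k-d}(I:\ell)_d=S_{k-d}I_d=I_k$. Your counting then yields persistence.

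To carry the strengthened statement to $n+1$ variables, use the Bayer--Stillman criterion: if $I$ is generated in degree $\le d$, $(I:\ell)_d=I_d$, and $I+\ell S$ is $d$-regular, then $I$ is $d$-regular. Your gin/Eliahou--Kervaire alternative is only a pointer, and it needs the same regularity input.

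Finally, in Step~3 do not try to match the displayed formula. Your $h_I(k)=\sum_i\binom{k-d+i+\epsilon_i}{k-d+i}$ is the correct one, and it is \emph{not} a reindexing of $\sum_i\binom{t+\epsilon_i}{t}$. For a plane conic, $c=2d+1$ has $\epsilon_d=\epsilon_{d-1}=1$ and all other $\epsilon_i=-1$; the displayed formula gives $2t+2$ instead of $2t+1$. The two agree when all $\epsilon_i\in\{-1,0\}$, which is the only case used in Corollary~\ref{corGreen}. The conclusion $\deg p_I=\epsilon_d=\dim V(I)$ is unaffected.
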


Let $I$ be an ideal. Let $\eta_d$ be the value of $\epsilon_d$ in the Macaulay expansion of $h_I(d)$. 
Since $c\mapsto c^{\langle d \rangle}$ is increasing, we find that $d\mapsto \eta_d$ is a non increasing function. Hence $\eta_d\geq \dim V(I)$ for all $d\geq 1$.

\begin{corollary}\label{corBase} Let $I\subset S$ be an ideal, let $d$ be an integer let $m$ be the dimension of the base locus of $|I_d|$. Then $h_I(d)\geq h_{\Ps^m}(d)$.
\end{corollary}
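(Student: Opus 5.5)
Let $J$ be the ideal generated by $I_d$. The base locus of $|I_d|$ is $V(J)$, which by hypothesis has dimension $m$. I want to show $h_I(d)=h_J(d)\geq h_{\Ps^m}(d)=\binom{d+m}{d}$. The natural tool is the observation just before the statement: $\eta_k\geq \dim V(J)$ for every $k$. Applying it at $k=d$ gives $\epsilon_d\geq m$ in the Macaulay expansion of $h_J(d)$.

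The argument runs as follows.

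First, $h_I(d)=h_J(d)$, because $J_d=I_d$.

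Next I justify $\eta_k\geq\dim V(J)$ for the ideal $J$. Macaulay's Theorem~\ref{thmMac} gives $h_J(k+1)\leq h_J(k)^{\langle k\rangle}$. Write $h_J(k)=\binom{k+\epsilon_k}{k}+(\text{lower terms})$. A short estimate shows $h_J(k)^{\langle k\rangle}<\binom{k+1+\epsilon_k+1}{k+1}$. Hence the leading index at $k+1$ satisfies $\eta_{k+1}\leq\eta_k$, so $k\mapsto\eta_k$ is non-increasing. For large $k$, $h_J(k)=p_J(k)$ is a polynomial of degree $\dim V(J)$ with positive leading coefficient. Such a quantity grows at least like $k^{\dim V(J)}/(\dim V(J))!$, and this forces $\eta_k\geq\dim V(J)$ for $k\gg0$. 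Indeed, if $\eta_k\leq e-1$ with $e=\dim V(J)$, then $h_J(k)<\binom{k+e}{k}$, which is of order $k^{e}/e!$. I need to be careful here: that bound alone is not a contradiction, since $p_J$ could have leading coefficient smaller than $1/e!$ times an integer? In fact the leading coefficient is $\deg V(J)/e!\geq 1/e!$. So I refine the estimate: if $\eta_k\leq e-1$, the whole expansion is bounded by $\sum_{i\le k}\binom{i+e-1}{i}=\binom{k+e}{k}-1$. That is still of order $k^e/e!$, so this route does not give a contradiction.

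The cleaner route is Gotzmann persistence. Choose $k_0\geq d$ large enough that $J_{\geq k_0}$ is generated in degree $k_0$ and $h_J(k+1)=h_J(k)^{\langle k\rangle}$ for $k\geq k_0$. This holds for $k_0$ at least the Gotzmann regularity. Theorem~\ref{thmGotz}, applied to the ideal generated by $J_{k_0}$, then gives $\dim V(J)=\eta_{k_0}$. Monotonicity yields $\eta_d\geq\eta_{k_0}=m$.

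Finally, $\eta_d\geq m$ gives $h_I(d)\geq\binom{d+\eta_d}{d}\geq\binom{d+m}{d}=h_{\Ps^m}(d)$.

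The main obstacle is the existence of a degree $k_0$ where Gotzmann's equality $h(k_0+1)=h(k_0)^{\langle k_0\rangle}$ holds. This is Gotzmann's persistence/regularity theorem for saturated ideals in large degree. I would invoke it as part of the cited result of \cite{Gotz}: the Hilbert function eventually attains Macaulay's bound. I would also check that passing to the saturation does not change $V(J)$ or the values $h_J(k)$ for $k\gg0$.
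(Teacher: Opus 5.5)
Your proof is correct and follows the same route as the paper. Both pass to the ideal generated by the degree-$d$ part (the paper uses $\oplus_{j\le d}I_j$, which has the same zero set) and use that the leading index $\epsilon_d$ of the Macaulay expansion of $h_I(d)$ is at least the dimension of the base locus, giving $h_I(d)\geq\binom{d+m}{d}$. Your justification of $\eta_k\geq\dim V(J)$ fills in what the paper's one-line remark before the corollary leaves implicit: monotonicity from Macaulay's bound, plus the fact that for $k\gg0$ the bound is attained, so Gotzmann's theorem identifies $\eta_k$ with $\dim V(J)$.
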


\begin{proof} We may replace $I$ by the ideal generated by $\oplus_{j=0}^d I_j$, so that the base locus of $I_k$ is precisely $X=V(I)\subset \Ps^n$. Consider the  Macaulay expansion of $h_I(d)$ in base $d$
\[c= \sum_{i=1}^d \binom{i+\epsilon_i}{i}.\]
As mentioned above $\epsilon_d \geq \dim X=m$, hence 
\[ c\geq \binom{d+m}{d}=h_{\Ps^m}(d).\]
\end{proof}

If  $c\leq d$ then we have the following Macaulay expansions in base $d$: $\epsilon_d=\dots=\epsilon_{d-c+1}=0$ and $\epsilon_{d-c}=\dots=\epsilon_1=-1$. Hence $c^{\langle d\rangle}=c$. Combining this with Gotzmann's result yields:

\begin{corollary}\label{corGreen} Let $I\subset S$ be an ideal such that $c=h_I(d)\leq d$ and $h_I(d+1)=h_I(d)$. Then $I$ is the ideal of a zero-dimensional scheme of length $c$. In particular, $I_{d+1}$ is not base point free. 
\end{corollary}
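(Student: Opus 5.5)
The idea is to reduce to the ideal generated by a single graded piece, so that Gotzmann's theorem (Theorem~\ref{thmGotz}) applies, and then read off the Hilbert polynomial from the Macaulay expansion already computed before the statement. Let $J\subset S$ be the ideal generated by $I_d$. Then $J_d=I_d$, so $h_J(d)=c$. Since $J_{d+1}=I_d\cdot S_1\subseteq I_{d+1}$, we get $h_J(d+1)\geq h_I(d+1)=c$.

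First I pin down $h_J(d+1)$. By Macaulay's theorem (Theorem~\ref{thmMac}) applied to $V=I_d$, we have $h_J(d+1)\leq c^{\langle d\rangle}$. The remark preceding the corollary shows that $c^{\langle d\rangle}=c$ when $c\leq d$. Hence $h_J(d+1)=c=c^{\langle d\rangle}$, so the hypothesis of Gotzmann's theorem holds for $J$.

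Gotzmann's theorem then gives $h_J(k+1)=h_J(k)^{\langle k\rangle}$ for all $k\geq d$. I argue by induction on $k$ that $h_J(k)=c$ for all $k\ge d$. The base case $k=d$ holds. For the inductive step, $c\leq d\leq k$, so the same computation of the expansion in base $k$ gives $\epsilon_k=\dots=\epsilon_{k-c+1}=0$ and the remaining $\epsilon_i=-1$, hence $c^{\langle k\rangle}=c$. Equivalently, the Hilbert polynomial formula in Theorem~\ref{thmGotz} gives
\[p_J(t)=\sum_{i=d-c+1}^{d}\binom{t}{t}=c,\]
and $\dim V(J)=\epsilon_d$, which is $0$ if $c\geq 1$. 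Thus $J$, and also its saturation, which agrees with $J$ in all degrees $\geq d$, defines a zero-dimensional scheme $Z$ of length $c$. This is the meaning of ``$I$ is the ideal of a zero-dimensional scheme of length $c$'': in degrees $\geq d$ the ideal generated by $I_d$ coincides with $I_Z$.

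For the last claim, compare $I_{d+1}$ with $J_{d+1}$. We have $J_{d+1}\subseteq I_{d+1}$, and both have codimension $c$ in $S_{d+1}$, so $I_{d+1}=J_{d+1}$. Its base locus is therefore $V(J)=Z$, which is nonempty as soon as $c\geq 1$. So $I_{d+1}$ is not base point free. In the degenerate case $c=0$ the scheme is empty, so the statement is to be read with $c\geq1$, which is the case in which it will be applied.

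The only step needing care is the bookkeeping around Gotzmann's theorem: it must be applied to the ideal generated by $I_d$ rather than to $I$ itself, and one must check that the inequality $h_J(d+1)\ge h_I(d+1)$ combines with Macaulay's bound to force equality. The rest follows directly from the Macaulay expansion computed before the corollary.
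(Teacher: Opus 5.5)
Your proof is correct and follows the paper's route. The observation $c^{\langle d\rangle}=c$ for $c\le d$ puts you in the equality case of Gotzmann's theorem, which forces the Hilbert polynomial to be the constant $c$. Your extra bookkeeping makes precise what the paper's one-line proof, which applies Gotzmann directly to $I$, glosses over: you pass to the ideal generated by $I_d$, use Macaulay's bound to verify Gotzmann's hypothesis for it, and require $c\ge 1$. Your reinterpretation of the first sentence is genuinely needed, since adding $(x_0,\dots,x_n)^{d+2}$ to such an ideal leaves the hypotheses intact but makes $V(I)$ empty. The ``in particular'' statement, which is what is used later, is unaffected.
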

\begin{proof} From Gotzmann's result it follows that the Hilbert polynomial of $I$ equals the constant $c$. Hence $V(I)$ is zero dimensional and of length $c$. 
\end{proof}

\section{The Artinian Gorenstein algebra associated to $X_{\sing}$}\label{secAG}
We start by summarizing some of the results mentioned in \cite[Section 3 and Proof of Theorem 4.1]{KloMaxNod}. The upshot of this is the construction of an Artinian Gorenstein algebra, associated to a non-factorial nodal  threefold $X\subset\Ps^4$, i.e., such that $\Cl(X)\neq \Pic(X)$. This algebra does not need to be unique.

Let $S=\C[x_0,x_1,x_2,x_3,x_4]$. Let $X=V(F)\subset \Ps^4$ be a nodal hypersurface of degree d. Suppose that $X$ is non-factorial, i.e., $\Pic(X) \subsetneq \Cl(X)$. It is well-known that then $\rank \Cl(X)>\rank \Pic(X)$ and that $h^4(X)>h^2(X)$ hold.
Let $J=I(X_{\sing})$. % For an ideal $I\subset S$ denote as in the previous  section with $h_I(k)=\dim (S/I)_k$ the Hilbert function of $I$. Let $p_I(k)$ be the Hilbert polynomial of $I$, the polynomial such that $p_I(k)=h_I(k)$ for large $k$.

The following result is well-known. For a discussion see for example \cite{KloMaxNod}.
\begin{proposition} Suppose $X$ is  nodal hypersurface in $\Ps^4$ of degree $d$. Then $X$ is factorial if and only if $X$ is $\Q$-factorial and
\[ h^4(X)-h^2(X)=\rk \Cl(X)-\rk \Pic(X)=p_J(2d-5)-h_J(2d-5).\]
In particular, if $X$ is not factorial then $p_J(2d-5)>h_J(2d-5)$.
\end{proposition}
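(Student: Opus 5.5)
The plan is to reduce both assertions to topology, and then to the classical defect formula for nodal hypersurfaces. The argument runs through $X$ and a small resolution (or the big resolution) $\tilde X \to X$.

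First I identify $\Pic(X)$ and $\Cl(X)$ with cohomology groups.
\begin{itemize}
\item Since $X\subset \Ps^4$ is a hypersurface of dimension $3$, Grothendieck--Lefschetz gives $\Pic(X)=\Z\cdot \mathcal{O}_X(1)$. The Lefschetz hyperplane theorem (valid for hypersurfaces with isolated singularities) gives $H^2(X,\Z)\cong\Z$, so $h^2(X)=1=\rk \Pic(X)$.
\item Every Weil divisor on $X$ has a fundamental class in $H_4(X,\Z)$. Since the singularities are nodes (isolated, with links $S^2\times S^3$), one shows that $\Cl(X)\to H_4(X,\Z)\cong H^4(X,\Z)$ is an isomorphism. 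Concretely, pass to a small analytic resolution $\tilde X$, where $\Cl(X)\cong \Pic(\tilde X)\cong H^2(\tilde X,\Z)\cong H_4(\tilde X,\Z)\cong H_4(X,\Z)$, using $h^{2,0}=0$.
\item Together these give $\rk\Cl(X)-\rk\Pic(X)=h^4(X)-h^2(X)$, the first equality.
\item For the torsion part, $\Cl(X)\cong H_4(X,\Z)$ is torsion free and $\Pic(X)\cong\Z$. Hence $\Cl(X)=\Pic(X)$ exactly when the two ranks agree, i.e.\ when $X$ is $\Q$-factorial.
\end{itemize}
This gives the first part of the equivalence: factorial $\iff$ $\Q$-factorial $\iff$ the defect $h^4-h^2$ vanishes.

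The second step is the defect formula (Clemens, Werner, Cynk). The defect $h^4(X)-h^2(X)$ equals the failure of the nodes to impose independent conditions on hypersurfaces of degree $2d-5$. I derive it via Griffiths' residue description of the primitive cohomology of the smooth part, using the Jacobian ring of $F$. Local duality at the nodes identifies the extra classes in $H^4$ with the cokernel of the evaluation map $S_{2d-5}\to \C^{\#\text{nodes}}$. The degree $2d-5$ is the adjoint degree $2d-5=d+(d-5)$ arising from $\omega_{\Ps^4}(2X)$ restricted to the conditions at the nodes. Since $J$ is the radical ideal of a reduced finite set of points, its Hilbert polynomial is the constant $p_J=\#X_{\sing}$. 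The dimension of that cokernel is therefore $p_J(2d-5)-h_J(2d-5)$, which is the second equality. The final ``in particular'' follows immediately: if $X$ is not factorial, the defect is positive.

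I expect the main obstacle to be the defect computation, i.e.\ justifying that the relevant degree is exactly $2d-5$ and that the residue/mixed Hodge argument gives equality rather than just an inequality. Since the paper treats this as well known, I would cite Cynk's treatment (``Defect of a nodal hypersurface'') and \cite{KloMaxNod} for it. I would only spell out the identification $\Cl(X)\cong H_4(X,\Z)$ and the torsion-freeness, which is what upgrades $\Q$-factoriality to factoriality.
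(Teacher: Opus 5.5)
The paper gives no argument for this proposition beyond a reference to \cite{KloMaxNod}. Your plan has three parts: cite the Werner/Cynk defect formula for the equality with $p_J(2d-5)-h_J(2d-5)$, use Grothendieck--Lefschetz for $\Pic(X)$, and use a resolution to get the rank identities. On the defect formula you are on the same footing as the paper. There is, however, a genuine gap in the step you single out as the one that upgrades $\Q$-factoriality to factoriality. From ``$\Cl(X)$ is torsion free and $\Pic(X)\cong\Z$'' it does not follow that $\Cl(X)=\Pic(X)$ once the ranks agree. Equal ranks only give that $\Pic(X)$ has finite index in $\Cl(X)$. A torsion-free $\Cl(X)\cong\Z$ can contain $\Pic(X)=\Z\cdot H$ with index $m>1$, so nothing you wrote excludes a non-Cartier Weil divisor $D$ with $mD\sim H$. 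What you need is that $\Cl(X)/\Pic(X)$ is torsion free, i.e.\ that $mD$ Cartier forces $D$ Cartier. Torsion-freeness of $\Cl(X)$ itself does not give this.

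That statement is local at the nodes, and it is where the nodal hypothesis is really used. The group $\Cl(X)/\Pic(X)$ injects into $\bigoplus_{p\in X_{\sing}}\Cl(\mathcal{O}_{X,p})$. Each local class group injects into the class group $\Z$ of the completion $\C[[x,y,z,w]]/(xy-zw)$. So the quotient is torsion free, and a $\Q$-factorial $X$ (quotient torsion) is therefore factorial.

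In your topological language the same fix reads as follows. Under $\Cl(X)\cong H^2(\tilde X,\Z)$ the subgroup $\Pic(X)$ corresponds to $\pi^*H^2(X,\Z)$. This is the image of $H^2(\tilde X,E)\cong H^2(X,X_{\sing})\cong H^2(X,\Z)$, where $E$ is the union of the exceptional curves. The exact sequence of the pair $(\tilde X,E)$ then embeds the cokernel in $H^2(E,\Z)\cong\Z^{\#X_{\sing}}$, which is torsion free.

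Two smaller corrections:
\begin{itemize}
\item A small resolution is in general neither projective nor K\"ahler, so ``$h^{2,0}=0$'' is not available. Use instead $H^i(\tilde X,\mathcal{O}_{\tilde X})\cong H^i(X,\mathcal{O}_X)=0$ for $i=1,2$, since nodes are rational singularities.
\item For singular $X$ you only get $\rk H_4(X,\Z)=h^4(X)$ by universal coefficients, not an integral isomorphism $H_4(X,\Z)\cong H^4(X,\Z)$. Ranks are all you use there, so this does not affect the argument.
\end{itemize}
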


Details of the following construction can be found in \cite[Proof of Theorem 4.1]{KloMaxNod}:
\begin{construction}
Suppose now that $X$ is non-factorial. Let $\ell$ be a general linear form and $R=S/\ell\cong \C[x_0,x_1,x_2,x_3]$. Consider the ideal $J_H$ of $R$ which is the image of $I+\langle \ell \rangle$ in $R$.
Since the partial derivatives of $F$ are contained in $J$, and a general hyperplane section of $X$ is smooth we get that $|J_{d-1}|$ is base point free.

The number of nodes of $X$ equals the constant polynomial $p_J$. Since $J$ is saturated and zero-dimensional and $H$ is general we obtain that
\[ p_J(2d-4)\geq h_J(2d-4)=\sum_{k=0}^{2d-4} h_{J_H}(k)\geq  \sum_{k=0}^{2d-4} h_I(k).\]
Moreover since $X$ is non-factorial we have that $p_J(2d-5)-h_J(2d-5)>0$ from which it easily follows that $h_{J_H}(2d-4)\neq 0$ (see \cite{KloMaxNod}). Hence $(J_H)_{2d-4}\neq R_{2d-4}$. 
 Using this and the fact that $(J_H)_{k}$ is base point free for $k\geq d-1$, one obtains that there is an ideal $I$ such that  $R/I$ is a quotient of $R/(J_H)$ and $R/I$ is an Artinian Gorenstein algebra of socle degree $2d-4$. The vector space dimension of $R/I$ is at most the number of nodes, which in the next section assume to be at most $2(d-2)(d-1)$.
 \end{construction}
\begin{remark}
Let $h=(a_0,\dots,a_{2d-4})$ be the $h$-vector of $S/I$, i.e., $a_k=h_I(k)=\dim_{\C} (S/I)_k$, for $k=0,\dots,2d-4$. We know that it is symmetric $a_k=a_{2d-4-k}$.
% and that  $a_0=1$ and that $a_i\geq a_{i-1}$ for $i=1,\dots,d-2$, see \cite{Kvitko}. Moreover, Kvitko uses that the difference $b_i=a_{i}-a_{i-1}$ for $i=1,\dots,d-2$ is an O-sequence, a fact which we will not need.
\end{remark}
\begin{lemma}\label{lemdi}
Let $d_j$ be the smallest degree $k$ such that the base locus of $|I_k|$ has dimension $j$. We have that $1\leq d_2\leq d_1\leq d_0\leq d_{-1}\leq d-1$.
Moreover, there exists polynomials $F_2,F_1,F_0,F_{-1}\in I$ such that $\deg(F_i)=d_i$ and such that the ideal $I_{CI}=(F_2,F_1,F_0,F_{-1})$ is  a complete intersection ideal. 

Then there exist a form $h$ of degree $d_2+d_1+d_0+d_{-1}-2d$ such that $I=(I_{CI}:h)$. In particular, $\sum_{i=-1}^2 d_i\geq 2d$ and if $\sum_{i=-1}^2 d_i=2d$ then $I=I_{CI}$.
\end{lemma}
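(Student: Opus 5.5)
We work in $R=\C[x_0,\dots,x_3]$, so $V(I)$ lives in $\Ps^3$. Since $R/I$ is Artinian, $V(I)=\emptyset$. The base locus of $|I_k|$ therefore decreases from dimension $3$ (when $I_k=0$) to empty. So the degrees $d_j$ are well defined and automatically satisfy $d_2\le d_1\le d_0\le d_{-1}$. The bound $d_{-1}\le d-1$ comes from the construction: $(J_H)_{d-1}$ is base point free and $J_H\subset I$, so $I_{d-1}$ is base point free. Also $d_2\ge 1$ holds trivially, since $I_0=0$ (the algebra is nonzero).

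Next we build the complete intersection inductively, by general choice. Take $F_2\in I_{d_2}$ general (nonzero). Suppose $F_2,\dots,F_{j+1}$ have been chosen with $\deg F_i=d_i$, defining a complete intersection of dimension $j$. Every component of this scheme has dimension $j$. The base locus of $|I_{d_j}|$ has dimension $j-1$, because $d_j$ is the first degree where it drops to dimension $j$ in the $\Ps^3$-indexing with $d_j$ giving dimension $j$. Concretely, $|I_{d_j}|$ contains no component of $V(F_2,\dots,F_{j+1})$. So by a Bertini/prime avoidance argument, a general $F_j\in I_{d_j}$ avoids all associated components, and the codimension goes up by one. One should fix the indexing so that it matches: $d_j$ is the degree at which the base locus first has dimension $\le j$. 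After four steps, $I_{CI}=(F_2,F_1,F_0,F_{-1})\subset I$ is a complete intersection of dimension $-1$, i.e.\ Artinian.

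The linkage step comes next. $R/I_{CI}$ is Artinian Gorenstein with socle degree $s=\sum d_i-4$, and $R/I$ is Artinian Gorenstein of socle degree $2d-4$ with $I_{CI}\subset I$. We use the standard fact from Macaulay inverse systems: if $I_{CI}=\mathrm{Ann}(G)$ with $G$ of degree $s$, then every Gorenstein quotient $R/I$ of $R/I_{CI}$ has dual generator $h\circ G$ for some form $h$. Here $\deg(h\circ G)=2d-4$, so $\deg h=s-(2d-4)=\sum d_i-2d$. Then
\[I=\mathrm{Ann}(h\circ G)=(\mathrm{Ann}(G):h)=(I_{CI}:h).\]
Equivalently, $(I_{CI}:I)$ is principal, generated by $h$, because $\omega_{R/I}$ embeds in $\omega_{R/I_{CI}}$. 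Since $\deg h\ge 0$, we get $\sum d_i\ge 2d$. If equality holds, $h$ is a nonzero constant, so $I=I_{CI}$.

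The main obstacle is the general-choice step: we must make sure that each $F_j$ can be taken in exactly degree $d_j$ and still cut the dimension down. This works because the base locus of $|I_{d_j}|$ contains no top-dimensional component of the previous complete intersection, together with the fact that $\C$ is infinite.
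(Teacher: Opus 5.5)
Your proposal is correct and follows the same route as the paper. Base point freeness of $I_{d-1}$ gives $d_{-1}\le d-1$, a general choice of $F_j\in I_{d_j}$ produces the complete intersection, and since $R/I$ is a Gorenstein quotient of $R/I_{CI}$ one gets $I=(I_{CI}:h)$ with $\deg h=(\sum d_i-4)-(2d-4)$, the difference of socle degrees. You supply the details the paper leaves as obvious: unmixedness plus prime avoidance for the regular sequence, and Macaulay duality $I=\mathrm{Ann}(h\circ G)$ for the colon ideal.

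Do fix the off-by-one in your induction. $V(F_2,\dots,F_{j+1})$ has pure dimension $j+1$, not $j$, while $V(I_{d_j})$ has dimension at most $j$, not $j-1$; that gap of one dimension is exactly why a general $F_j$ lowers the dimension. Your reading of $d_j$ as the first degree where the base locus has dimension at most $j$ is the right one, and it is how the paper uses it later (e.g.\ $d_2=d_1=d_0=1$ when $a_1=1$).
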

\begin{proof}
The existence of $F_2,F_1,F_0,F_{-1}$ is obvious. Moreover, since $I_{d-1}$ is base point free, we find $d_{-1}\leq d-1$.

The algebra $R/I$ is a quotient of $R/I_{CI}$,  both algebras are Artinean Gorenstein algebras. Hence there is a form $h$ such that $I=(I_{CI}:h)$ with $\deg(h)$ the difference of the socle degrees of $R/I_{CI}$ and $R/I$.
\end{proof}

\begin{remark}The ideal $I_{CI}$ depends on some choices, but its $h$-vector is determined by $I$.
\end{remark}

\section{Proof of the Theorem}~\label{secPf}
In this section let $X=V(F)$ be  a degree 5 hypersurface in $\Ps^4$ with only nodes as singularities,  with defect and with at most $2(d-2)(d-1)=24$ nodes. Let $I\subset R$ be an ideal constructed form $I(X_{\sing})$ as in the previous section. Then $I$ has socle degree $2d-4=6$ and $I_{d-1}=I_4$ is base point free.

Let $(1,a_1,a_2,a_3,a_4,a_5,1)$ be the $h$-vector of $I$. Since $R/I$ is Artinian Gorenstein we have that  $a_4=a_2$ and $a_5=a_1$.
Moreover, since $a_1\leq 4$ by construction and $2+\sum a_i\leq 24$ we have that the $h$-vector is unimodal, i.e, $1\leq a_1\leq a_2\leq a_3$, see \cite{MNZ}.

The key part of our proof is the following proposition:

\begin{proposition}\label{prpIdealShape} The ideal $I$ is a complete intersection ideal of multidegree either $(1,1,4,4)$ or $(1,2,3,4)$.
\end{proposition}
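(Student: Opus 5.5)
The plan is to work entirely with the $h$-vector $(1,a_1,a_2,a_3,a_2,a_1,1)$ of $R/I$, where $R=\C[x_0,\dots,x_3]$, and with the degrees $d_2\leq d_1\leq d_0\leq d_{-1}\leq 4$ of Lemma~\ref{lemdi}. The argument has three steps:
\begin{enumerate}
\item show that $\sum_{i=-1}^{2} d_i=10=2d$, so that $I=I_{CI}$ by Lemma~\ref{lemdi};
\item list the multidegrees with this sum and entries at most $4$;
\item discard all but two of them by counting length.
\end{enumerate}
The standing constraints are symmetry, unimodality $1\leq a_1\leq a_2\leq a_3$ (from \cite{MNZ}), and $2+2a_1+2a_2+a_3\leq 24$.

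The lower bounds on the $a_k$ come from Corollary~\ref{corBase}. For $k<d_j$ the base locus of $|I_k|$ has dimension $>j$, so $a_k\geq h_{\Ps^{j+1}}(k)$. Concretely:
\begin{itemize}
\item $a_k=\binom{k+3}{3}$ for $k<d_2$;
\item $a_k\geq \binom{k+2}{2}$ for $k<d_1$;
\item $a_k\geq k+1$ for $k<d_0$;
\item $a_k\geq 1$ for $k<d_{-1}$.
\end{itemize}

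First I bound the $d_i$ from above. If $d_2\geq 3$, then $a_1=4$ and $a_2=10$, so $a_3\geq 10$ and $2+8+20+10>24$, a contradiction. Hence $d_2\in\{1,2\}$. Similarly, if $d_1\geq 3$ then $a_2\geq 6$ and $a_3\geq 10$, which with $a_1\geq 3$ gives the total bound $\geq 2+6+12+10=30>24$. Hence $d_1\leq 2$ unless $d_2=1$, and $d_1=3$ is excluded outright. The same counting limits the range of $d_0$.

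Sum too small is already excluded by Lemma~\ref{lemdi}, since $\sum d_i\geq 10$. With every $d_i\leq 4$ and $d_2,d_1\leq 2$, the candidates with sum $10$ are $(1,1,4,4)$, $(1,2,3,4)$, $(2,2,2,4)$ and $(2,2,3,3)$. The only candidate with sum above $10$ is $(2,2,3,4)$ and its relatives of sum $11$ or $12$, for instance $(1,2,4,4)$ and $(2,2,4,4)$.

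Excluding $\sum d_i>10$ is the step I expect to be the main obstacle. In that case $I=(I_{CI}:h)$ with $\deg h=\sum d_i-10\geq 1$. The length of $R/I$ is then the length of $R/I_{CI}$, namely $\prod d_i$, minus the length of $R/(I_{CI}+(h))$. I would first attack this numerically. For example, for $(1,2,4,4)$ the $h$-vector of $R/I$ is obtained from that of $R/I_{CI}$ (socle degree $7$) by the standard formula for a linked Gorenstein ideal. I would check in each case that the result either violates $\sum a_k\leq 24$, or violates the minimality of the $d_i$: the colon ideal must not acquire elements of degree below $d_j$ that cut the base locus further.

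Where the numerics do not suffice, I would use Corollary~\ref{corGreen}. If some $a_k\leq k$ with $a_{k+1}=a_k$, then $I_{k+1}$ has base points. Applied in degrees $k+1\geq 4$ this contradicts base point freeness of $I_4$; applied in lower degrees it contradicts the definition of $d_0$ or $d_{-1}$. This forces the growth of the $h$-vector that the counting then rules out.

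Finally, among the multidegrees with sum $10$, a complete intersection of multidegree $(d_2,d_1,d_0,d_{-1})$ has length $\prod d_i$. This is $32$ for $(2,2,2,4)$ and $36$ for $(2,2,3,3)$, both exceeding $24$. It is $16$ for $(1,1,4,4)$ and $24$ for $(1,2,3,4)$, which are exactly the two cases stated in Proposition~\ref{prpIdealShape}.
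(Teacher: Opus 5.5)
\noindent Most of your reduction is fine. The bounds $d_2,d_1\le 2$ hold, though for $d_1=3$ Corollary~\ref{corBase} gives only $a_2\ge 6$, not $a_3\ge 10$; unimodality $a_3\ge a_2$ still gives $2+6+12+6=26>24$. The list of multidegrees with $\sum d_i=10$ and the count $\prod d_i$ are correct. The cases with $d_2=2$ also fail as you suggest: there $a_1=4$, so either $a_2=4$, contradicting base point freeness of $I_4$ by Corollary~\ref{corGreen}, or the length is at least $25$.

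The genuine gap is the case $(1,2,4,4)$. You flag it as the main obstacle but do not resolve it, and your numerical route cannot resolve it. No formula gives the $h$-vector of $R/(I_{CI}:h)$ from that of $R/I_{CI}$. The sequence $0\to R/(I_{CI}:h)(-1)\xrightarrow{h} R/I_{CI}\to R/(I_{CI}+(h))\to 0$ shows that it depends on $R/(I_{CI}+(h))$, i.e.\ on $h$. Work modulo the linear generator, with $I_{CI}=(Q,G,H)$, so that $R/I_{CI}$ has $h$-vector $(1,3,5,7,7,5,3,1)$. Then $R/I$ can a priori have $h$-vector $(1,2,3,4,3,2,1)$ (when $h\mid Q$), $(1,3,5,7,5,3,1)$, or $(1,3,5,6,5,3,1)$. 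The first violates $a_1=3$ and the second the bound $24$. The third has length exactly $24$ and is precisely the $h$-vector of a complete intersection of type $(1,2,3,4)$. So no count, and no use of Corollary~\ref{corGreen}, separates it from the case you want to exclude. The real question is whether the extra cubic in $I_3$ meets the conic $V(Q)$ in points ($d_0=3$) or shares a line with it ($d_0=4$).

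\noindent Your phrase ``the colon ideal must not acquire elements of degree below $d_j$ that cut the base locus further'' is therefore the statement that needs proof, and the proposal gives no mechanism for it. The paper's last lemma supplies one. If $d_0=4$ and $a_3=6$, the extra cubic generator shares a line with $V(Q)$. Write $Q=x_0L$ and the generator as $x_0F$ with $L\nmid F$. From $x_0Fh\in I_{CI}$ you get $x_0Fh=QK+\lambda G+\mu H$, and since $x_0,G,H$ is a regular sequence, $\lambda=\mu=0$. Hence $Fh=LK$, so $h=L$, and $x_0=Q/h\in(I_{CI}:h)=I$ is a second linear form, contradicting $a_1=3$.

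\noindent Within your linkage framework there is an alternative. If $h\nmid Q$ and $a_3=6$, some nonzero combination of $G,H$ lies in $(Q,h)$. You may then take $H=hB$ with $B$ a cubic in $I$ coprime to $Q$, which forces $d_0\le 3$.

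\noindent Without an argument of this kind you have not shown $\sum d_i=10$, so the proposition is not proved.
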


The proof will follow from the following lemmas. 

\begin{lemma} We have that $a_1\in \{2,3\}$.
\end{lemma}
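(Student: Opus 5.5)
The plan is to use $a_1\leq 4$ (since $R=\C[x_0,\dots,x_3]$) and rule out the two extreme values $a_1=1$ and $a_1=4$. In both cases I will show that $h_I(4)=h_I(5)\leq 4$. Corollary~\ref{corGreen} then forces a base point of $|I_4|$, which contradicts the base point freeness of $I_4=I_{d-1}$ from the construction in Section~\ref{secAG}.

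\emph{The key mechanism.} Let $I'$ be the ideal generated by $\bigoplus_{j\leq 4} I_j$. Then $h_{I'}(4)=h_I(4)=a_4=a_2$. Since $I'\subset I$, we also have $h_{I'}(5)\geq h_I(5)=a_5=a_1$. On the other hand, Theorem~\ref{thmMac} gives $h_{I'}(5)\leq h_{I'}(4)^{\langle 4\rangle}$. For $c\leq 4$ the remark before Corollary~\ref{corGreen} gives $c^{\langle 4\rangle}=c$. Hence whenever $a_2=a_1\leq 4$ we get $h_{I'}(5)=h_{I'}(4)=a_1\leq 4$. Corollary~\ref{corGreen}, applied to $I'$ with $d=4$, then says that $I'$ is the ideal of a zero-dimensional scheme of length $a_1\geq 1$. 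In particular $V(I')\neq\emptyset$, so $|I'_4|=|I_4|$ has a base point, a contradiction. It therefore suffices to show that $a_1=a_2$ in both cases $a_1=1$ and $a_1=4$.

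\emph{Case $a_1=1$.} By Macaulay's bound (Theorem~\ref{thmMac}) with $1^{\langle 1\rangle}=1$, we get $a_2\leq 1$. Unimodality gives $a_2\geq a_1=1$, so $a_2=1=a_1$, and the mechanism above applies.

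\emph{Case $a_1=4$.} Here I use the bound on the number of nodes. The length satisfies $2+2a_1+2a_2+a_3\leq 24$, and unimodality gives $4\leq a_2\leq a_3$. Substituting yields $2a_2+a_3\leq 14$, hence $3a_2\leq 14$, so $a_2=4=a_1$. The mechanism applies again.

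The only delicate point is that Corollary~\ref{corGreen} must be applied to the ideal $I'$ generated in degrees $\leq 4$, not to $I$ itself. The Gorenstein ideal $I$ is Artinian, so it has no Hilbert polynomial of positive constant value. The step $h_{I'}(5)\geq h_I(5)$ together with Macaulay's upper bound is what makes the equality $h_{I'}(5)=h_{I'}(4)$ hold.
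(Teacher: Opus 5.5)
Your proof is correct, but in the case $a_1=1$ it takes a different route from the paper. For $a_1=4$ you follow the paper. The paper splits into $a_2=4$ (handled by Corollary~\ref{corGreen}) and $a_2\geq 5$ (length at least $25$); your inequality $3a_2\leq 14$ merges the two.

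You apply Corollary~\ref{corGreen} to the ideal $I'$ generated by $I_{\leq 4}$, using
$a_1=h_I(5)\leq h_{I'}(5)\leq a_2^{\langle 4\rangle}=a_2$.
This makes explicit a point the paper glosses over: read literally for the Artinian ideal $I$ itself, the corollary's conclusion would be false.

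In the case $a_1=1$, the paper invokes Lemma~\ref{lemdi}, i.e.\ the linkage bound $\sum d_i\geq 2d=10$. It observes that $d_2=d_1=d_0=1$ would force $d_{-1}\geq 7>4$. You instead use Macaulay's bound $a_2\leq 1^{\langle 1\rangle}=1$ to get $a_2=a_1$, and then reuse the same Gotzmann mechanism to produce a base point of $|I_4|$.

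Your route treats both extreme cases uniformly. It needs only the Section~2 tools plus base point freeness of $I_4$, and avoids the complete-intersection/linkage lemma. The paper's route is a one-liner once Lemma~\ref{lemdi} is available, and the paper needs that lemma anyway for the subsequent lemmas.

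Your $a_1=1$ case could be shortened further. In suitable coordinates $I_1=\langle x_1,x_2,x_3\rangle$, so $I_4\supseteq (x_1,x_2,x_3)_4$, which has codimension one in $R_4$. Since $a_4\geq 1$ (socle degree $6$), this forces $I_4=(x_1,x_2,x_3)_4$, which visibly vanishes at the point $V(I_1)$.
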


\begin{proof}
Since $R/I$ has socle degree $6$ and $I_1\subset R_1=\C[x_0,x_1,x_2,x_3]_1$ we obtain that $1\leq a_1\leq 4$.

Suppose $a_1=1$ then the base locus of $I_1$ is a point. Therefore $d_2=d_1=d_0=1$ and $d_{-1}\geq 10-d_2-d_1-d_0=7$, contradicting $d_{-1}\leq 4$.

Suppose now that $a_1=4$ and $a_2=4$.  Then $a_4=a_5=4$. From Corollary~\ref{corGreen} it follows that the base locus of $I_4$ and of $I_5$ is not empty, contradicting that $I_4$ is base-point-free.

Suppose now that $a_1=4$ and $a_2\geq 5$, then $a_3\geq 5$ and $2+\sum_{i=1}^5 a_i\geq 25$, contradicting the fact that this number is at most the number of nodes, which is at most 24.
\end{proof}

\begin{lemma} 
If $a_1=2$ then $I$ is a complete intersection ideal of multidegree $(1,1,4,4)$. 
\end{lemma}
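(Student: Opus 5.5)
If $a_1=2$, then $I_1$ is spanned by two independent linear forms, so its base locus is a line $L\subset \Ps^3=V(\ell)$. Hence $d_2=d_1=1$. By Lemma~\ref{lemdi} we have $d_0,d_{-1}\leq 4$ and $d_2+d_1+d_0+d_{-1}\geq 2d=10$. This forces $d_0+d_{-1}\geq 8$, so $d_0=d_{-1}=4$ and $\sum d_i=10=2d$. The last clause of Lemma~\ref{lemdi} then gives $I=I_{CI}$, a complete intersection of multidegree $(1,1,4,4)$.

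It remains to justify that the base locus of $|I_1|$ has dimension exactly $1$. With $a_1=2$, $I_1$ has dimension $2$ inside $R_1$ of dimension $4$, so $V(I_1)$ is a line. Thus $d_2=1$ (the base locus of $|I_k|$ has dimension $\leq 2$ once $I_1\neq 0$) and $d_1=1$. Here $d_j$ is read as the smallest degree at which the base locus has dimension at most $j$, which is the reading under which Lemma~\ref{lemdi} makes sense.

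The one step needing care is the bound $d_0\leq 4$. By definition $d_0\leq d_{-1}$, and $d_{-1}\leq d-1=4$ because $I_4$ is base point free; so the chain of inequalities in Lemma~\ref{lemdi} already supplies it. The main obstacle is only this bookkeeping of the definition of $d_j$ and the degree count; no Hilbert function analysis is needed.

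As a consistency check, the $h$-vector of $R/(l_1,l_2,f_4,g_4)\cong \C[x,y]/(f_4,g_4)$ is $(1,2,3,4,3,2,1)$. It has socle degree $6$ and length $16\leq 24$, matching the data.
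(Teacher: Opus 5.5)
Your proof is correct and is essentially the paper's argument: $a_1=2$ gives $d_2=d_1=1$, and the bounds $d_0\le d_{-1}\le 4$ together with $\sum d_i\ge 10$ force $d_0=d_{-1}=4$ and $\sum d_i=2d$, whence $I=I_{CI}$ by Lemma~\ref{lemdi}. Your reading of $d_j$ as the first degree at which the base locus has dimension at most $j$ is the one the paper implicitly uses when it asserts $d_2=d_1=1$.
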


\begin{proof} If $a_1=2$ then $d_2=d_1=1$. From $d_i\leq 4$ and $d_2+d_1+d_0+d_{-1}\geq 2d= 10$, it follows now that $d_0=d_{-1}=4$ and that $\sum d_i=10$. In particular, $S/I$ and $S/I_{CI}$ are Artinian Gorenstein algebras with the same socle degree, and since $I_{CI}\subset I$, it follows that $I=I_{CI}$.
\end{proof}

\begin{lemma} 
If $a_1=3$ then $a_2<6$. In particular, $d_2=1, d_1=2, d_0\in \{3,4\}$ and $d_{-1}=4$. 
Moreover, if $d_0=3$ then $I$ is a complete intersection ideal of multidegree $(1,2,3,4)$.
\end{lemma}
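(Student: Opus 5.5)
The plan is to first bound $a_2$ and then read off the degrees $d_i$. The starting data are the $h$-vector $(1,3,a_2,a_3,a_2,3,1)$ with $3\le a_2\le a_3$, the node bound $8+2a_2+a_3\le 24$, and Lemma~\ref{lemdi}.

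\emph{Step 1: $a_2<6$.} Since $a_1=3$, $I_1$ is spanned by a single linear form $\ell_1$, and we work in $R/\ell_1\cong\C[y_0,y_1,y_2]$. If $a_2=6$, then $I_2$ is just $\ell_1R_1$ and there are no new quadrics. Unimodality gives $a_3\ge 6$, and $2a_2+a_3\le 16$ then forces $a_3\le 4$, which is impossible. So $a_2\le 5$, which is the first claim.

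\emph{Step 2: the degrees.} Since $a_1=3$, the base locus of $I_1$ is a plane, so $d_2=1$. Since $a_2\le 5<6$, there is a quadric in $I_2$ not divisible by $\ell_1$, and the base locus of $I_2$ is then a conic in that plane. Hence $d_1\le 2$. We cannot have $d_1=1$, because $I_1$ is one-dimensional, so $d_1=2$. Lemma~\ref{lemdi} gives $d_0\ge d_1=2$, $d_{-1}\le 4$ and $\sum d_i\ge 10$. This forces $d_0+d_{-1}\ge 7$, so $d_0\ge 3$ and $d_{-1}=4$. Thus $d_0\in\{3,4\}$.

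\emph{Step 3: the case $d_0=3$.} Here $\sum d_i=1+2+3+4=10=2d$. Lemma~\ref{lemdi} then says $h$ is a constant, so $I=I_{CI}$, a complete intersection of multidegree $(1,2,3,4)$.

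The step I expect to need the most care is Step 2, specifically checking that $d_1=2$ follows correctly from the definitions. The point is that a nonzero quadric not divisible by $\ell_1$ cuts the plane $V(\ell_1)$ in a curve, so the base locus drops to dimension $1$ in degree $2$. The bound $a_2<6$ from Step 1 is exactly what guarantees such a quadric exists, since $a_2<6=\dim(R/\ell_1)_2$. The remaining steps are direct.
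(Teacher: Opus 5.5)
Your proof is correct and follows essentially the same route as the paper. You rule out $a_2\ge 6$ using unimodality and the bound $8+2a_2+a_3\le 24$, then use $d_{-1}\le 4$ and $\sum d_i\ge 10$ from Lemma~\ref{lemdi} to pin down the $d_i$, and conclude $I=I_{CI}$ when $\sum d_i=10$. You also spell out why $a_2<6$ forces $d_1=2$, which the paper leaves implicit, and you correctly write $d_0+d_{-1}\ge 7$ where the paper has the typo $d_0+d_1\ge 7$.
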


\begin{proof} 
If $a_5=a_1=3$ and $a_4=a_2\geq 6$ then $a_3\geq 6$. In particular $2+\sum_{i=1}^5 a_i\geq 26$, which contradicts the fact that this number is at most 24.
Hence $a_2<6$ and $d_1=2$. Using $\sum d_i\geq 10$  we obtain $d_{0}+d_1\geq 7$. From $d_0\leq d_{-1}\leq 4$ it follows that $d_{-1}=4, 3\leq d_0\leq 4$. If $d_0=3$ then both $I$ and $I_{CI}$ have the same socle degree and it follows that they coincide.
\end{proof}

\begin{lemma} 
If $a_1=3$ then  $d_0=3$. 
\end{lemma}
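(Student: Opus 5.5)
The plan is to argue by contradiction. Assume $a_1=3$ and $d_0=4$. By the previous lemma, $d_2=1$, $d_1=2$, $d_{-1}=4$ and $a_2\in\{3,4,5\}$. Write $A=R/I$, which is Artinian Gorenstein of socle degree $6$ with $h$-vector $(1,3,a_2,a_3,a_2,3,1)$ and $2a_2+a_3\leq 16$. Let $\ell$ span $I_1$.

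\textbf{Step 1: the base curve of $I_3$.} Since $d_0=4$, the base locus of $|I_3|$ contains a curve $C'$ lying in the plane $V(\ell)\cong\Ps^2$. I would first determine $C'$ from $I_2$. Modulo $\ell$, the space $I_2$ is a linear system of plane conics of dimension $6-a_2\geq 1$ whose base locus is at least one-dimensional.
\begin{itemize}
\item If $a_2=5$, then $I_2=(\ell,q)_2$ and $C'\subset V(\ell,q)$.
\item If $a_2\leq 4$, the conics must share a common line $V(\ell,m)$, with $I_2=(\ell, m\cdot W)_2$ for a space $W$ of linear forms, so the only curve in the base locus is the line $L=V(\ell,m)$.
\end{itemize}
So in all cases $C'$ is either a line $L$ or (when $a_2=5$) possibly the conic $V(\ell,q)$. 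The conic case should be excluded by counting: if $I_3$ vanishes on a conic, then $a_3\geq h_{\mathrm{conic}}(3)=7$ by the argument of Corollary~\ref{corBase}, and so $2a_2+a_3\geq 17>16$. Thus $C'$ is a line component $L=V(\ell,m)$ of the base locus of $I_3$ (possibly inside the conic).

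\textbf{Step 2: Gorenstein duality with respect to $m$.} Consider the exact sequence $0\to (0:_A m)\to A(-1)\xrightarrow{m} A\to A/mA\to 0$. Here $A/mA=R/(I+(m))$ contains in degrees $\leq 3$ only the forms vanishing on $L$, since $I_{\leq 3}$ vanishes on $L$. Hence $\dim (A/mA)_k=k+1$ for $k\leq 3$, and $\dim (mA)_k=a_k-(k+1)$ for $k\leq 3$. Moreover $mA\cong (A/(0:m))(-1)$, and $B=A/(0:_A m)=R/(I:m)$ is again Artinian Gorenstein of socle degree $5$. Its $h$-vector $(b_0,\dots,b_5)$ is therefore symmetric, with $b_0=a_1-2=1$, $b_1=a_2-3$ and $b_2=a_3-4$, and $b_3=b_2$, $b_4=b_1$, $b_5=1$. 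This gives
\[\dim (A/mA)_4=a_4-b_3=a_2-a_3+4,\qquad \dim(A/mA)_5=a_5-b_4=6-a_2.\]

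\textbf{Step 3: contradiction.} Since $b_1=a_2-3\leq 2$ and $B$ is Gorenstein of socle degree $5$, the unimodality and Macaulay bounds (Theorem~\ref{thmMac}) on $b$ pin down $a_3$ in each case $a_2\in\{3,4,5\}$; for example $b_1\leq 1$ forces $b=(1,1,1,1,1,1)$. I then want to show that $\dim(A/mA)_4=\dim(A/mA)_5\leq 4$, or more generally that the Hilbert function of $R/(I+(m))$ stabilizes at a value $\leq 4$ in degrees $4,5$. Corollary~\ref{corGreen}, applied to $I+(m)$, then shows that $(I+(m))_4$ has a base point on $L$. Such a point is also a base point of $I_4$, contradicting that $I_4$ is base point free. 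This forces $d_0=3$.

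I expect Step 3 to be the main obstacle, because the numerics must be checked separately for $a_2=3,4,5$, and in some sub-case the Hilbert function of $I+(m)$ may fail to be constant in degrees $4,5$. If that happens, my fallback is to compare directly with the complete intersection $I_{CI}$ of Lemma~\ref{lemdi}: with $d_0=4$ one has $\sum d_i=11$, so $I=(I_{CI}:h)$ for a linear form $h$. I would then compare $h$-vectors, that of $R/I_{CI}$ for multidegree $(1,2,4,4)$ being $(1,3,5,6,6,5,3,1)$, to get the contradiction.
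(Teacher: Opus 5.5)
Your Steps 1 and 2 are correct, but Step 3 fails in every case that survives, so the main plan never produces a contradiction. Carry out the numerics. The case $b_1=0$ is impossible, since a standard graded $B$ with $b_1=0$ cannot have $b_5=1$; hence $a_2\neq 3$. The case $b_1=1$ forces $b=(1,1,1,1,1,1)$, hence $a_3=5$. The case $b_1=2$ forces $b_2=2$ (if $b_2=1$, Macaulay gives $b_4\leq 1$), hence $a_3=6$. In the two remaining cases $(a_2,a_3)=(4,5)$ and $(5,6)$, your own formulas give $\dim(A/mA)_4=3$, and $\dim(A/mA)_5=2$, resp.\ $1$. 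So the Hilbert function of $I+(m)$ is never constant in degrees $4,5$, and Corollary~\ref{corGreen} never applies. Indeed, the restriction of $I_4$ to $L$ is then just a pencil of binary quartics, and nothing in the numerics forces a common zero.

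The fallback does not close the gap either. First, a complete intersection of type $(1,2,4,4)$ has $h$-vector $(1,3,5,7,7,5,3,1)$ (length $32$), not $(1,3,5,6,6,5,3,1)$. With the correct vector, the residual $R/(I_{CI}+(h))$ has $h$-vector $(1,2,2,7-a_2,7-a_3,5-a_2,0,0)$. For $a_2=4$ this is $(1,2,2,3,2,1,0,0)$, which violates Macaulay's bound, so that case does die. But for $(a_2,a_3)=(5,6)$, i.e.\ $h$-vector $(1,3,5,6,5,3,1)$ of length $24$, the residual is $(1,2,2,2,1,0,0,0)$. That is the $h$-vector of a $(2,4)$ complete intersection in two variables, so no comparison of Hilbert functions excludes this case.

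What is missing is the structural argument the paper uses. Work modulo $\ell$ and write $I_{CI}=(Q,G,H)$ with $Q=mm'$; every element of $I_{\leq 3}$ is divisible by $m$ because $L$ lies in the base locus of $I_3$. Since $a_3=6<7$, $I_3$ contains an extra cubic $mq$ with $m'\nmid q$. Then $mqh\in (I_{CI})_4=(Q)_4+\langle G,H\rangle$. Because $m\mid Q$ and $m,G,H$ is a regular sequence, the coefficients of $G$ and $H$ must vanish. Hence $m'\mid qh$, so $h\equiv m'$ up to a scalar. But then $mh=Q\in I_{CI}$, so $m\in (I_{CI}:h)=I$, contradicting $I_1=\langle\ell\rangle$. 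The same divisibility trick, with $q$ linear, also disposes of $a_2\leq 4$ without any Hilbert-function bookkeeping.
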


\begin{proof} 
By the previous lemmas we only have to exclude the case where $I$ satisfies $a_1=3, d_2=1,d_1=2,d_0=4=d_{-1}$. We will now derive a contradiction assuming that  $I$ has these invariants.
Let $I'=I_{CI}$ be a complete intersection ideal with multidegree $(d_2,d_1,d_0,d_{-1})$ contained in $I$. Then the socle degree of $S/I'$ equals $\sum d_i-4=7$, whereas the socle degree of $S/I$ equals 10. Hence there is a linear form $h\in S$  such that $I=(I':h)$.

Recall that  $I\subset \C[x_0,x_1,x_2,x_3]$  has a generator of degree $1$, and a generator of degree $2$. This yields $a_1=3,a_2=5$. If there is no further generator of degree at most $ 3$ then $I_3$ is the degree 3 part of the ideal of a plane conic $C$, and $a_3=7$. In particular, $2+\sum_{i=1}^5 a_i=25>24$, a contradiction. Hence $I$ has another generator of degree at most 3, but the base locus of $I_3$ is has the same dimension as the base locus of $I_2$. Hence the conic $C$ is reducible or non-reduced and the base locus of $I_3$ is a line.

After a change of coordinates we may assume that $x_3$ is the linear generator of both $I$ and $I'$. We mod out this generator and work with $\C[x_0,x_1,x_2]$ instead and the base loci are now $\Ps^2$. Then (the image of) $I'$ is generated by a quadric $Q$ and two forms of degree 4. The image of the  ideal $I$ contains the quadric $Q$ and at least one further generator of degree 2 or 3. The base locus in degree 3 has dimension one. However, hence the zerosets of these two generators have a common component which has to be a line, since it is a proper subset of the conic $V(Q)$. After a further change of coordinates we may assume that $x_0$ divides $Q$ and that the second generator of $I$ equals $x_0F$ where $F$ is either of degree 1 or of degree 2 and not a multiple of $Q/x_0$. Moreover, there are two further generators of $I'$ of degree $4$, say $G,H$, i.e., such that $I'=(Q,G,H)$.

Recall that $I=(I':h)$ for some linear form $h$.
Suppose first that $\deg(F)=2$. Since $x_0F\in I=(I':h)$, there exist a form $K$ of degree $2$, and constants $\lambda, \mu$ such that
\[ (x_0F)h= QK+\lambda F+\mu G.\]
Since $x_0$ divides $Q$, so does it divide $\lambda F+\mu G$.
Since $Q,F,G$ form a complete intersection, so do $x_0,F,G$. Hence the smallest degree to find a syzygy between  $x_0,F,G$ is $\deg(x_0)+\deg(F)$. From this it follows that $\lambda=\mu=0$.

In other words, independent of the value of $\deg(F)$ there exists a form $K$ such that
\[  x_0Fh=QK\]
Since $F$ is not a multiple of $Q/x_0$ we find that $h=Q/x_0$. However,  $Q\in I'$ and therefore $Q/h\in (I':h)=I$, a contradiction.
\end{proof}

These lemma's together prove Proposition~\ref{prpIdealShape}.

\begin{theorem} Suppose $X\subset \Ps^4$ is a non-factorial  nodal quintic threefold with at most 24 nodes. Then $X$ contains a plane and has at least 16 nodes or $X$ contains a quadric surface and has $24$ nodes.
\end{theorem}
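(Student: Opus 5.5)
We will deduce the theorem from Proposition~\ref{prpIdealShape} in the same way as in \cite{KloMaxNod}: knowing that $I$ is a complete intersection of multidegree $(1,1,4,4)$ or $(1,2,3,4)$, we lift the low-degree generators of $I$ back to $S=\C[x_0,\dots,x_4]$ and show that they cut out a plane or a quadric surface contained in $X$.

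\textbf{Step 1: node counts.} First compute the $h$-vectors. For $(1,1,4,4)$ the $h$-vector is $(1,2,3,4,3,2,1)$, of total dimension $16$. For $(1,2,3,4)$ it is $(1,3,5,6,5,3,1)$, of total dimension $24$. Since $\dim R/I$ is at most the number of nodes, the first case gives at least $16$ nodes. The second case gives at least $24$ nodes, hence exactly $24$ by hypothesis.

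\textbf{Step 2: equality of $h$-vectors in the extremal case.} In the second case the inequality chain
\[ p_J(2d-4)\geq h_J(2d-4)=\sum_{k} h_{J_H}(k)\geq \sum_k h_I(k)\]
forces equality throughout. Hence $I=J_H$ in degrees up to $6$, and $h_J(k)=\sum_{j\le k}h_I(j)$. In the first case we only know that $J_H\subset I$. However, the linear generators of $I$ are then linear forms vanishing on $J_H$ in low degree. We will use that $h_{J_H}(1)=h_I(1)$ whenever there are few nodes, which follows from the same counting argument since the remaining budget is at most $8$.

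\textbf{Step 3: lifting to $\Ps^4$.} Since $J$ is saturated of dimension zero and $\ell$ is general, $J_H$ agrees with $(J+\langle\ell\rangle)/\langle\ell\rangle$. Low-degree elements of $J_H$ therefore lift to $J$ in degrees where $h_J$ grows as computed. In case $(1,2,3,4)$ this shows that the nodes lie on a hyperplane $x_4=0$, on a quadric $Q$, and on a cubic. In case $(1,1,4,4)$ it shows that the nodes lie on two hyperplanes, i.e.\ on a plane $\Pi$. We then argue as in \cite{KloMaxNod}. For the plane case: if $\Pi\not\subset X$, then $X\cap \Pi$ is a plane quintic curve, and the nodes of $X$ lying on $\Pi$ are singular points of that curve. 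A plane quintic with at least $16$ singular points must be non-reduced, and a further restriction argument then forces $\Pi\subset X$. For the quadric case: the nodes lie on the surface $V(x_4,Q)$, and one shows that $X\cap V(x_4,Q)$ is a curve of degree $10$ on the quadric with $24$ singular points. That is impossible unless the quadric surface lies in $X$.

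\textbf{Main obstacle.} The hardest step will be Step~3. Descending from the hyperplane section to $J$ itself requires controlling $h_J$ in low degrees. I would first try to use the symmetry of the Hilbert function of $J_H$ together with Corollary~\ref{corGreen} and Theorem~\ref{thmGotz}, applied to $J$ in degrees $1,2$, to force the base locus of $J_{\le 2}$ to be a surface of the right degree. If that fails, I would cite the corresponding lifting argument in \cite[Section 4]{KloMaxNod} directly.
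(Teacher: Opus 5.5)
Your Step 1 node counts and the identification $I=J_H$ in the $(1,2,3,4)$ case agree with the paper. The $(1,1,4,4)$ case, however, has a genuine gap.

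The claim $h_{J_H}(1)=h_I(1)$ does not follow from counting. The sum $\sum_k h_I(k)=16$ leaves a budget of $8$, which is compatible with $h_{J_H}(1)=3$ or $4$. The claim is also false in general. Take a general quintic $x_0A+x_1B$ that is required to be singular at one point $p\notin\Pi=V(x_0,x_1)$. It has $17$ nodes, $16$ on $\Pi$ plus $p$, so its $I$ must be of type $(1,1,4,4)$, yet its nodes span a $\Ps^3$. So ``the nodes lie on a plane'' fails.

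The difficulty is not lifting from $J_H$ to $J$; that is automatic, since $(J_H)_k$ is by definition the image of $J_k$. The difficulty is that $I$ is only a Gorenstein quotient of $R/J_H$, so the linear forms of $I$ need not lie in $J_H$ at all.

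The paper avoids this by working in degree $3$. Since $(J_H)_3\subseteq I_3$, the base locus $B$ of $|J_3|$ meets a general $H$ in a set containing the line $V(I_3)$, so $\dim B\geq 2$. If $\dim B\geq 3$, Corollary~\ref{corBase} gives $h_J(3)\geq 20$ and hence at least $20+3+2+1=26$ nodes. So $B$ contains a plane $\Pi$. Even then, concluding $\Pi\subset X$ needs a real argument. The paper uses Noether--Lefschetz theory on the smooth surface $X\cap H$ to show that the line $\Pi\cap H$ lies on it. Your ``further restriction argument'' for non-reduced plane quintics is never supplied, and in any case it would not see nodes lying off the plane.

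The last step of the quadric case also fails as stated. A degree-$10$ curve on a quadric surface can have $24$ or more singular points. For example, five lines from each ruling of a smooth quadric give a $(5,5)$-curve with $25$ nodes, and non-reduced curves are singular along whole components. Moreover, $V(L,Q)$ need not be smooth. So $24$ singular points on $X\cap V(L,Q)$ prove nothing by themselves.

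What is needed is the precise structure of the nodes:
\begin{itemize}
\item Lifting the generators of $J_H=I$ gives $L,Q,G_3,G_4\in J$. They generate a saturated complete intersection of degree $24$ contained in $J$, hence equal to $J$.
\item Since $F$ is singular at every node, $F$ lies in the saturation of $J^2$, which for a complete intersection is $J^2$ itself.
\item Every generator of $J^2$ of degree at most $5$, namely $L^2,LQ,Q^2,LG_3,LG_4,QG_3$, lies in $(L,Q)$.
\end{itemize}
Hence $F\in(L,Q)$ and $X$ contains the quadric surface $V(L,Q)$; this is exactly the paper's argument.
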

This proof follows closely parts of the proofs of \cite[Lemma 5.3 and 5.4]{KloMaxNod}.
\begin{proof}
From Proposition~\ref{prpIdealShape} it follows that the ideal $I$ is a complete intersection ideal of multidegree $(1,1,4,4)$ or of multudegree $(1,2,3,4)$. 

Suppose first that $I$ is a complete intersection of multidegree $(1,1,4,4)$. Let $B$ be the base locus of $J_3$. Let $X=V(\ell)$. Then $B\cap H$ contains the base locus of $I_3$, which is a line. Since $H$ is general we obtain that $B$ has dimension at least 2. We can use the argument of \cite[Proof of Lemma 5.3]{KloMaxNod} to conclude that $B$ has dimension at most two: Suppose $B$ has dimension at least three, then from Corollary~\ref{corBase} we obtain that
\[ h_B(3)\geq h_{\Ps^3}(3)=20.\]
But then $h_I(6)\geq h_B(3)+h_I(4)+h_I(5)+h_I(6)=20+3+2+1=26>24$.
 Hence $\dim B=2$ and $B$ contains an irreducible component of dimension 2, such that the general hyperplane section of this component is a line, and therefore $B$ contains a plane.

As in the proof of \cite[Lemma 5.3]{KloMaxNod} it follows from Noether-Lefschetz theory  that the line contained in $B\cap H$ is contained in $X_H=X\cap H$, hence $X$ contains a plane.

If $I$ is a complete intersection of multidegree $(1,2,3,4)$ then the vector space dimension of $S/I$ equals $24$. Since it is a quotient of $S/(J_H)$, whose vector space dimension is at most 24 we find $I=J_H$.
But then $J$ is generated by a linear form $L$, a quadric $Q$ and generators of degree 3 and 4. 

Recall that $X$ is singular at $V(J)$, hence $F$ is contained in the saturation of $J^2$. As in \cite[Proof of Theorem 5.5]{KloMaxNod} we obtain that every generator of this saturated ideal of degree equal or less than 5 is divisible by $L$ or by $Q$. Hence  $F\in \langle L,Q\rangle$ and $X$ contains a quadric surface.

The number of nodes is at least $\sum_{k=0}^6 h_I(k)$ which equals 16 in the first case and 24 in the second case.
\end{proof}

\bibliographystyle{plain}
\bibliography{remke2}
\end{document}